\providecommand{\U}[1]{\protect\rule{.1in}{.1in}}
\providecommand{\U}[1]{\protect\rule{.1in}{.1in}}
\newtheorem{theorem}{Theorem}[section]
\numberwithin{equation}{section}
\begin{document}
\title[A new proof that the power $\frac{2m}{m+1}$ is sharp]{A simple proof that the power $\frac{2m}{m+1}$ in the Bohnenblust--Hille
inequalities is sharp}
\author[Daniel Nu\~{n}ez-Alarc\'{o}n and Daniel Pellegrino]{Daniel Nu\~{n}ez-Alarc\'{o}n and Daniel Pellegrino}
\address{Departamento de Matem\'{a}tica \\
Universidade Federal da Para\'{\i}ba \\
58.051-900 - Jo\~{a}o Pessoa, Brazil.}
\email{pellegrino@pq.cnpq.br and dmpellegrino@gmail.com}
\thanks{}
\keywords{Bohnenblust--Hille inequalities}

\begin{abstract}
The power $\frac{2m}{m+1}$ in the polynomial (and multilinear)
Bohnenblust--Hille inequality is optimal. This result is well-known but its
proof highly nontrivial. In this note we present a quite simple proof of this fact.

\end{abstract}
\maketitle

\section{Introduction}

The polynomial and multilinear Bohnenblust--Hille inequalities were proved by
H.F. Bohnenblust and E. Hille in 1931 and play a crucial role in different
fields as Fourier and Harmonic Analysis and Quantum Information Theory (see
\cite{annals, defant, diniz2}).

The polynomial Bohnenblust--Hille inequality proves the existence of a
positive function $C:\mathbb{N}\rightarrow\lbrack1,\infty)$ such that for
every $m$-homogeneous polynomial $P$ on $\mathbb{C}^{N}$, the $\ell_{\frac
{2m}{m+1}}$-norm of the set of coefficients of $P$ is bounded above by $C_{m}$
times the supremum norm of $P$ on the unit polydisc. This result has important
striking applications in different contexts (see \cite{annals}). The
multilinear version of the Bohnenblust--Hille inequality asserts that for
every positive integer $m\geq1$ there exists a sequence of positive scalars
$\left(  C_{m}\right)  _{m=1}^{\infty}$ in $[1,\infty)$ such that
\[
\left(  \sum\limits_{i_{1},\ldots,i_{m}=1}^{N}\left\vert T(e_{i_{^{1}}}%
,\ldots,e_{i_{m}})\right\vert ^{\frac{2m}{m+1}}\right)  ^{\frac{m+1}{2m}}\leq
C_{m}\sup_{z_{1},\ldots,z_{m}\in\mathbb{D}^{N}}\left\vert T(z_{1},\ldots
,z_{m})\right\vert
\]
for all $m$-linear forms $T:\mathbb{C}^{N}\times\cdots\times\mathbb{C}%
^{N}\rightarrow\mathbb{C}$ and every positive integer $N$, where $\left(
e_{i}\right)  _{i=1}^{N}$ denotes the canonical basis of $\mathbb{C}^{N}$ and
$\mathbb{D}^{N}$ represents the open unit polydisk in $\mathbb{C}^{N}$.

The original proof (\cite{bh}) that the power $\frac{2m}{m+1}$ is optimal is
quite puzzling. According to Defant \textit{et al (\cite[page 486]{annals})},
Bohnenblust and Hille \textquotedblleft showed, through a highly nontrivial
argument, that the exponent $\frac{2m}{m+1}$ cannot be
improved\textquotedblright\ or according to Defant and Schwarting \cite[page
90]{DS}, Bohnenblust and Hille showed \textquotedblleft with a sophisticated
argument that the exponent $\frac{2m}{m+1}$ is optimal\textquotedblright. In
\cite{Blei} there is an alternative proof for the case of multilinear
mappings, but the arguments are also nontrivial, involving $p$-Sidon sets and
sub-Gaussian systems. The main goal of this note is to present a quite
elementary proof (which solves simultaneously the cases of polynomials and
multilinear mappings) of the optimality of $\frac{2m}{m+1}$.


\section{The new proof of the sharpness of $\frac{2m}{m+1}$}

We will show that the optimality of the power $\frac{2m}{m+1}$ is a
straightforward consequence of the following famous result known as
Kahane-Salem-Zygmund inequality~(see \cite[Theorem 4, Chapter 6]{Kah} or
\cite[page 21]{Bay}):


\begin{theorem}
[Kahane-Salem-Zygmund inequality]Let $m,n$ be positive integers. Then there
are signs $\varepsilon_{\alpha}=\pm1$ so that the $m$-homogeneous polynomial
\[
P_{m,n}:\ell_{\infty}^{n}\rightarrow\mathbb{C}%
\]
given by%
\[
P_{m,n}=%
{\textstyle\sum_{\left\vert \alpha\right\vert =m}}
\varepsilon_{\alpha}z^{\alpha}%
\]
satisfies%
\[
\left\Vert P_{m,n}\right\Vert \leq Cn^{\left(  m+1\right)  /2}\sqrt{\log m}%
\]
where $C$ is an universal constant (it does not depend on $n$ or $m$)$.$
\end{theorem}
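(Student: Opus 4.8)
The plan is to choose the signs at random and estimate the expected supremum of the resulting polynomial by a chaining argument. Concretely, I would let $(\varepsilon_{\alpha})_{|\alpha|=m}$ be independent Rademacher variables and regard $z\mapsto P_{m,n}(z)$ as a centered stochastic process indexed by the distinguished boundary $\mathbb{T}^{n}$; since $P_{m,n}$ is a polynomial, the maximum modulus principle gives $\|P_{m,n}\|=\sup_{z\in\mathbb{T}^{n}}|P_{m,n}(z)|$, so it suffices to control this supremum. Writing $N=\binom{n+m-1}{m}$ for the number of monomials, the first step is to record that for fixed $z,w\in\mathbb{T}^{n}$ the difference $P_{m,n}(z)-P_{m,n}(w)=\sum_{|\alpha|=m}\varepsilon_{\alpha}(z^{\alpha}-w^{\alpha})$ is a Rademacher sum, so Hoeffding's inequality yields subgaussian tails with respect to the metric $d(z,w)=\bigl(\sum_{|\alpha|=m}|z^{\alpha}-w^{\alpha}|^{2}\bigr)^{1/2}$. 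In particular $\mathbb{E}|P_{m,n}(z)|^{2}=N$ at every point of $\mathbb{T}^{n}$, which fixes the ambient scale.

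The core of the argument is then Dudley's entropy bound
\[
\mathbb{E}\,\sup_{z\in\mathbb{T}^{n}}|P_{m,n}(z)|\le C\int_{0}^{2\sqrt{N}}\sqrt{\log N(\mathbb{T}^{n},d,\epsilon)}\,d\epsilon ,
\]
so I need good estimates on the covering numbers $N(\mathbb{T}^{n},d,\epsilon)$. Here I would use the elementary inequality $|z^{\alpha}-w^{\alpha}|\le m\,\|z-w\|_{\infty}$, valid on the torus, which gives $d(z,w)\le m\sqrt{N}\,\|z-w\|_{\infty}$ and hence $\log N(\mathbb{T}^{n},d,\epsilon)\lesssim n\log\!\bigl(m\sqrt{N}/\epsilon\bigr)$ from the volumetric covering estimate for $\mathbb{T}^{n}$, while the $d$-diameter is at most $2\sqrt{N}$. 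Substituting $\epsilon=\sqrt{N}\,u$ turns the entropy integral into $\sqrt{N}\int_{0}^{2}\sqrt{n\log(Cm/u)}\,du\lesssim\sqrt{Nn\log m}$, the factor $\log m$ emerging because the integral is governed by the scales $u\sim1$ near the diameter, where the effective resolution is of order $m$. Finally the elementary estimate $N=\binom{n+m-1}{m}\le 2n^{m}$ converts this into $\mathbb{E}\sup_{\mathbb{T}^{n}}|P_{m,n}|\le Cn^{(m+1)/2}\sqrt{\log m}$, and since the expectation is bounded by the right-hand side there must exist a concrete choice of signs $\varepsilon_{\alpha}=\pm1$ for which the same bound holds; this is the asserted inequality.

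I expect the delicate point to be producing the logarithm of $m$ rather than of $n$. A single-scale net combined with a union bound and the Bernstein inequality $\|\partial_{j}P_{m,n}\|\le m\|P_{m,n}\|$ is more transparent but only delivers the weaker factor $\sqrt{\log(mn)}$, because a net fine enough to recover the supremum has cardinality of order $(cmn)^{n}$; it is precisely the multiscale nature of the chaining bound that lets the dimension $n$ exit as the harmless prefactor $\sqrt{n}$ while leaving only $\log m$ inside the square root. The remaining routine work --- verifying that the constants in Hoeffding's and Dudley's inequalities combine into a single universal $C$, checking the covering estimate, absorbing the lower-order base-point term $\mathbb{E}|P_{m,n}(z_{0})|\le\sqrt{N}$, and treating separately the easy boundary regimes (small $m$ and $n<m$) --- I would carry out at the end, once the main scaling is in place.
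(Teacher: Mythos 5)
Your proposal is correct in outline, but there is an important mismatch with the premise of the exercise: the paper does not prove this theorem at all. It imports the Kahane--Salem--Zygmund inequality as a black box, citing \cite{Kah} and \cite{Bay}, and uses it only as the input to its (genuinely short) optimality argument. So there is no internal proof to compare against; what you have written is essentially the standard probabilistic proof found in those references, reorganized around Dudley's entropy bound. The computations that matter do check out: on the torus the telescoping estimate $|z^{\alpha}-w^{\alpha}|\le m\|z-w\|_{\infty}$ holds because $z^{\alpha}$ is a product of exactly $m$ unimodular coordinates, so $d(z,w)\le m\sqrt{N}\,\|z-w\|_{\infty}$ carries no factor of $n$; the volumetric bound then gives $\log N(\mathbb{T}^{n},d,\epsilon)\lesssim n\log\bigl(Cm\sqrt{N}/\epsilon\bigr)$, and after the substitution $\epsilon=\sqrt{N}u$ the integral $\int_{0}^{2}\sqrt{\log(Cm/u)}\,du$ is indeed $\lesssim\sqrt{\log m}$ for $m\ge2$ (split $\log(Cm/u)=\log(Cm)+\log(1/u)$ and use $\sqrt{a+b}\le\sqrt{a}+\sqrt{b}$; the $\log(1/u)$ piece integrates to a universal constant, which is absorbed since $\sqrt{\log m}\ge\sqrt{\log 2}$). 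Your side remark about the single-scale net is also accurate: the crude Lipschitz bound $|P(z)-P(w)|\le\sum_{j}\|\partial_{j}P\|\,|z_{j}-w_{j}|\le nm\|P\|\,\|z-w\|_{\infty}$ forces a net at resolution $\sim 1/(nm)$, which is exactly where the spurious $\log n$ enters, and multiscale chaining is the standard cure. One simplification: $N=\binom{n+m-1}{m}=\prod_{k=1}^{m}\frac{n+k-1}{k}\le n^{m}$ since every factor is at most $n$, so you do not need the cruder $2n^{m}$ or any restriction to $n>m$ at this step.

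One point you cannot defer to "routine boundary regimes": for $m=1$ the inequality as stated is false, not merely delicate, since $\|P_{1,n}\|=n$ while the right-hand side is $Cn\sqrt{\log 1}=0$; no separate treatment of small $m$ rescues it, and the statement must be read with $m\ge2$ (or with $\sqrt{\log m}$ replaced by $\sqrt{\log(1+m)}$, as in some formulations in the literature). This costs nothing for the paper, whose application fixes $m\ge2$ from the outset, but your closing sentence should say that $m=1$ is excluded rather than that it is handled separately. With that proviso, and with the deferred constant-chasing (complex-valued subgaussian increments via real and imaginary parts, the base-point term $\mathbb{E}|P_{m,n}(z_{0})|\le\sqrt{N}$), your argument is a complete and correct proof of the cited theorem.
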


\begin{theorem}
The power $\frac{2m}{m+1}$ in the Bohnenblust--Hille inequalities is sharp.
\end{theorem}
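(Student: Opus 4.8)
The plan is to feed the Kahane--Salem--Zygmund polynomials into the Bohnenblust--Hille inequality and then let the number of variables tend to infinity. Suppose the polynomial inequality held with some exponent $q$ in place of $\frac{2m}{m+1}$, so that for a constant $C_m$ independent of $n$ one would have
\[
\left(\sum_{|\alpha|=m}|a_\alpha|^{q}\right)^{1/q}\le C_m\,\|P\|
\]
for every $m$-homogeneous polynomial $P=\sum_{|\alpha|=m}a_\alpha z^{\alpha}$ on $\ell_\infty^{n}$. The goal is to show that this forces $q\ge\frac{2m}{m+1}$, which is exactly the assertion of sharpness.

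First I would take $P=P_{m,n}$, the polynomial supplied by the Kahane--Salem--Zygmund theorem. Because every coefficient $\varepsilon_\alpha$ has modulus $1$, the left-hand side collapses to $\big(\#\{\alpha:|\alpha|=m\}\big)^{1/q}=\binom{n+m-1}{m}^{1/q}$, the number of monomials of degree $m$ in $n$ variables raised to the power $1/q$. For fixed $m$ this count behaves like $n^{m}/m!$ as $n\to\infty$, so the left-hand side is of order $n^{m/q}$. For the right-hand side I would invoke the Kahane--Salem--Zygmund bound $\|P_{m,n}\|\le C\,n^{(m+1)/2}\sqrt{\log m}$, so that the right-hand side is at most a constant depending only on $m$ times $n^{(m+1)/2}$.

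Combining the two estimates yields $c\,n^{m/q}\le C_m\,C\sqrt{\log m}\,n^{(m+1)/2}$ for every $n$. Letting $n\to\infty$ and comparing the powers of $n$, the inequality can persist only if $\frac{m}{q}\le\frac{m+1}{2}$, that is, $q\ge\frac{2m}{m+1}$. Hence no exponent smaller than $\frac{2m}{m+1}$ can work, which is precisely sharpness for the polynomial inequality.

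I do not anticipate a genuine obstacle, since the Kahane--Salem--Zygmund inequality carries the whole argument: it produces a single family of polynomials whose sup norm is only of order $n^{(m+1)/2}$ (up to the harmless factor $\sqrt{\log m}$), while all of their coefficients are unimodular and therefore force the $\ell_q$-norm of the coefficient vector to be large. The one point needing care is to hold $m$ fixed while sending $n\to\infty$, so that $C_m$ and $C\sqrt{\log m}$ are honest constants in the limiting comparison of exponents. The multilinear case runs identically, replacing $P_{m,n}$ by the corresponding $m$-linear form with unimodular coefficients (of which there are $n^{m}$) and using the same $n^{(m+1)/2}$ sup-norm bound, so both inequalities are settled simultaneously.
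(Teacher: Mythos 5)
Your proposal is correct and follows essentially the same route as the paper: plug the Kahane--Salem--Zygmund polynomials into a hypothetical Bohnenblust--Hille inequality with exponent $q$, use that the coefficient count $\binom{n+m-1}{m}\sim n^m/m!$ makes the left side grow like $n^{m/q}$ against the $n^{(m+1)/2}$ sup-norm bound, and let $n\to\infty$ to force $q\ge\frac{2m}{m+1}$. The only divergence is the multilinear case, where you invoke a multilinear Kahane--Salem--Zygmund bound (true, but not the theorem stated in the paper), whereas the paper gets it for free from the standard fact that the multilinear inequality with power $q$ implies the polynomial one with the same power, so sharpness transfers automatically.
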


\begin{proof}
Let $m\geq2$ be a fixed positive integer. For each $n$, let $P_{m,n}%
:\ell_{\infty}^{n}\rightarrow\mathbb{C}$ be the $m$-homogeneous polynomial
satisfying the Kahane-Salem-Zygmund inequality. For our goals it suffices to
deal with the case $n>m.$

Let $q<\frac{2m}{m+1}$. Then a simple combinatorial calculation shows that%
\[
\left(
{\textstyle\sum_{\left\vert \alpha\right\vert =m}}
\left\vert \varepsilon_{\alpha}\right\vert ^{q}\right)  ^{1/q}=\left(
p(n)+\frac{1}{m!}%
{\textstyle\prod\limits_{k=0}^{m-1}}
(n-k)\right)  ^{\frac{1}{q}},
\]
where $p\left(  n\right)  >0$ is a polynomial of degree $m-1.$ If the
polynomial Bohnenblust--Hille inequality was true with the power $q$, then
there would exist a constant $C_{m,q}>0$ so that%
\[
C_{m,q}C\geq\frac{1}{n^{\left(  m+1\right)  /2}\sqrt{\log m}}\left(
p(n)+\frac{1}{m!}%
{\textstyle\prod\limits_{k=0}^{m-1}}
(n-k)\right)  ^{1/q}%
\]
for all $n$. If we raise both sides to the power of $q$ and make
$n\rightarrow\infty$ we obtain
\[
\left(  C_{m,q}C\right)  ^{q}\geq\lim_{n\rightarrow\infty}\left(  \frac
{r(n)}{m!n^{q\left(  m+1\right)  /2}\left(  \sqrt{\log m}\right)  ^{q}}%
+\frac{p(n)}{n^{q\left(  m+1\right)  /2}\left(  \sqrt{\log m}\right)  ^{q}%
}\right)  ,
\]
with%
\[
r(n)=%
{\textstyle\prod\limits_{k=0}^{m-1}}
(n-k).
\]
Since
\[
\deg r=m>\frac{q(m+1)}{2}%
\]
we have%
\[
\lim_{n\rightarrow\infty}\left(  \frac{r(n)}{m!n^{q\left(  m+1\right)
/2}\left(  \sqrt{\log m}\right)  ^{q}}+\frac{p(n)}{n^{q\left(  m+1\right)
/2}\left(  \sqrt{\log m}\right)  ^{q}}\right)  =\infty,
\]
a contradiction. Since the multilinear Bohnenblust--Hille inequality (with a
power $q$) implies the polynomial Bohnenblust--Hille inequality with the same
power, we conclude that $\frac{2m}{m+1}$ is also sharp in the multilinear case.
\end{proof}

\bigskip

\end{document}